\documentclass{article}

\usepackage[final]{graphicx}   
\usepackage{psfrag}   
\usepackage{amsmath,amsfonts,amssymb,amsxtra,subeqnarray}

\newcommand {\KL}{\ensuremath{\mathcal{K}\mathcal{L}}}
\newcommand {\K}{\ensuremath{\mathcal{K}}}
\newcommand {\Real}{\ensuremath{{\mathbb{R}}}}
\newcommand {\Natural}{\ensuremath{{\mathbb{N}}}}

\newcommand{\C}{\ensuremath{\mathcal C}}

\newcommand{\A}{\ensuremath{\mathcal A}}

\newcommand{\setE}{\ensuremath{\mathcal E}}

\newcommand{\G}{\ensuremath{\mathcal G}}
\newcommand{\N}{\ensuremath{\mathcal N}}

\newcommand{\ex}{\ensuremath{{\mathbf{x}}}}
\newcommand{\zi}{\ensuremath{{\mathbf{z}}}}

\newcommand{\vay}{\ensuremath{{\mathbf{y}}}}

\newtheorem{theorem}{Theorem}

\newtheorem{lemma}{Lemma}

\newtheorem{definition}{Definition}

\newenvironment{proof}{\noindent {\bf Proof.}}{\hfill \hspace*{1pt}\hfill$\blacksquare$}

\begin{document}

\title{Nonlinearly coupled harmonic oscillators:
high frequency oscillations yield synchronization} 
\author{S. Emre Tuna\\
{\em \small{Middle East Technical University, Ankara, Turkey}}}
\maketitle

\begin{abstract}
Synchronization of coupled harmonic oscillators is
investigated. Coupling considered here is pairwise, unidirectional,
and described by a nonlinear function (whose graph resides in the
first and third quadrants) of some projection of the relative distance
(between the states of the pair being coupled) vector.
Under the assumption that the interconnection topology defines a
connected graph, it is shown that the synchronization manifold is
semiglobally practically asymptotically stable in the frequency of
oscillations.
\end{abstract}

\section{Introduction}

Synchronization in coupled dynamical systems, due to the broad range
of applications, has been a common ground of investigation for
researchers from different disciplines. Most of the work in the area
studies the case where the interconnection between individual systems
is linear; see, for instance,
\cite{wu95,pecora98,pogromsky02,wu05,belykh06,liu08}.  Nonlinear
coupling is also of interest since certain phenomena cannot be
meaningfully modelled by linear coupling. A particular system
exemplifying nonlinear coupling that attracted much attention is
Kuramoto model and its like
\cite{bonilla98,pikovsky09}. Among more general results allowing
nonlinear coupling are \cite{arcak07,stan07} where passivity theory is
employed to obtain sufficient conditions for synchronization under
certain symmetry or balancedness assumptions on the graph describing
the interconnection topology.

In this paper we consider the basic equation of the theory of
oscillations 
\begin{equation*}
\ddot{q}=-\omega^{2}q
\end{equation*}
i.e., the harmonic oscillator. One physical example is a unit mass
attached to a spring. We interest ourselves with the following
question.  {\em If we take a number of identical mass-spring systems
and couple some pairs by unidirectional nonlinear dampers, will they
eventually oscillate synchronously?}  To be precise, do the solutions
of the following array of coupled harmonic oscillators
\begin{equation*}
\ddot{q}_{i}=-\omega^{2}q_{i}+\sum_{j\neq i}\gamma_{ij}(\dot{q}_{j}-\dot{q}_{i})\,,
\quad i=1,\,2,\,\ldots,\,p
\end{equation*}
(where $\gamma_{ij}(\cdot)$ is either identically zero or it is some
function whose graph lies entirely in the first and third quadrants)
synchronize? This question may be of direct importance for certain
simple mechanical systems or electrical circuits, but our main
interest in it is due to the possibility that it may serve as a lucky
starting point for understanding a more general scenario.

Regarding the above question, our finding in the paper is {\em
roughly} that synchronization occurs among mass-spring systems if the
springs are stiff enough and there is at least one system that
directly or indirectly effects all others. More formally, what we show
is that, for a given set of functions $\{\gamma_{ij}(\cdot)\}$
describing the coupling configuration, if the graph representing the
interconnection topology is connected, then the solutions can be made
to converge to an arbitrarily small neighborhood of the
synchronization manifold, starting from initial conditions arbitrarily
far from it by choosing large enough $\omega$. In technical terms,
what we establish is the semiglobal practical asymptotic stability
(in $\omega$) of the synchronization manifold. Intuition
and simulations tell us that global asymptotic synchronization should
occur regardless of what $\omega$ is. This however we have not been
able to prove (nor disprove).

We reach our final result in three steps. Note that the solution of an
uncoupled harmonic oscillator defines a rotating vector on the
plane. Thanks to linearity of the system the speed of this rotation
($\omega$) is independent of the initial conditions. As a first step
therefore we express the systems with respect to a rotating coordinate
system. This change of variables yields coupled systems whose
righthand sides are periodic in time (with period $2\pi/\omega$). Our
second step is to exploit this periodicity. We obtain the average
systems and realize that they belong to a well-studied class of
systems pertaining to {\em consensus} problems
\cite{lin07}. We then deduce that the solutions of average systems 
converge to a fixed point on the plane. As our final step we use the
result of \cite{teel99} to conclude that the global asymptotic
synchronization of average systems implies the semiglobal practical
asymptotic synchronization of coupled harmonic oscillators.
  
\section{Preliminaries}
Let $\Natural$ denote the set of nonnegative integers and $\Real_{\geq
0}$ the set of nonnegative real numbers. Let $|\!\cdot\!|$ denote
Euclidean norm. For $\ex=[x_{1}^{T}\ x_{2}^{T}\ \ldots\
x_{p}^{T}]^{T}$ with $x_{i}\in\Real^{n}$ we let
$\A:=\{\ex\in\Real^{np}:x_{i}=x_{j}\
\mbox{for all}\ i,\,j\}$ be {\em synchronization manifold}. A
function $\alpha:\Real_{\geq 0}\to
\Real_{\geq 0}$ is said to belong to class-{\K} $(\alpha\in\K)$ if it is 
continuous, zero at zero, and strictly increasing. A function
$\beta:\Real_{\geq 0}\times\Real_{\geq 0}\to\Real_{\geq 0}$ is said to
belong to class-$\KL$ if, for each $t\geq 0$, $\beta(\cdot,\,t)$ is
nondecreasing and $\lim_{s\to 0^{+}}\beta(s,\,t)=0$, and, for each
$s\geq 0$, $\beta(s,\,\cdot)$ is nonincreasing and
$\lim_{t\to\infty}\beta(s,\,t)=0$.  Given a closed set ${\mathcal
S}\subset\Real^{n}$ and a point $x\in\Real^{n}$, $|x|_{\mathcal S}$
denotes the (Euclidean) distance from $x$ to ${\mathcal S}$.

A ({\em directed}) {\em graph} is a pair $(\N,\,\setE)$ where $\N$ is
a nonempty finite set (of {\em nodes}) and $\setE$ is a finite
collection of ordered pairs ({\em edges}) $(n_{i},\,n_{j})$ with
$n_{i},\,n_{j}\in\N$. A {\em directed path} from $n_{1}$ to $n_{\ell}$
is a sequence of nodes $(n_{1},\,n_{2},\,\ldots,\,n_{\ell})$ such that
$(n_{i},\,n_{i+1})$ is an edge for
$i\in\{1,\,2,\,\ldots,\,\ell-1\}$. A graph is {\em connected} if it
has a node to which there exists a directed path from every other
node.\footnote{This is another way of saying that the graph contains a
spanning tree.} 

A set of functions $\{\gamma_{ij}:\Real\to\Real\}$, where
$i,\,j=1,\,2,\,\ldots,\,p$ with $i\neq j$, describes (is) an {\em
interconnection} if the following hold for all $i,\,j$ and all $s\in\Real$:
\vspace{-0.05in}
\begin{itemize}
\item[(i)] $\gamma_{ij}(0)=0$ and $s\gamma_{ij}(s)\geq 0$.
\vspace{-0.05in}
\item[(ii)] Either $\gamma_{ij}(s)\equiv 0$ or there exists $\alpha\in\K$ 
such that $|\gamma_{ij}(s)|\geq \alpha(|s|)$. 
\end{itemize}
\vspace{-0.05in}
To mean $\gamma_{ij}(s)\equiv 0$ we write $\gamma_{ij}=0$. Otherwise
we write $\gamma_{ij}\neq 0$. The graph of interconnection
$\{\gamma_{ij}\}$ is pair $(\N,\,\setE)$, where
$\N=\{n_{1},\,\ldots,\,n_{p}\}$ and $\setE$ is such that
$(n_{i},\,n_{j})\in\setE$ iff $\gamma_{ij}\neq 0$. An interconnection
is said to be {\em connected} when its graph is connected.

To give an example, consider a set of functions $\C:=\{\gamma_{ij}:
i,\,j=1,\ldots,\,4\}$.  Let
$\gamma_{13},\,\gamma_{23},\,\gamma_{24},\,\gamma_{32}$ be as in
Fig.~1 while the remaining functions be zero. Note
that each $\gamma_{ij}$ satisfies conditions (i) and (ii). Therefore
set $\C$ describes an interconnection. To determine whether $\C$ is
connected or not we examine its graph, see Fig.~2. Since
there exists a path to node $n_{4}$ from every other node, we deduce
that the graph (hence interconnection $\C$) is connected.

\begin{figure}[h]
\begin{center}
\includegraphics[scale=0.6]{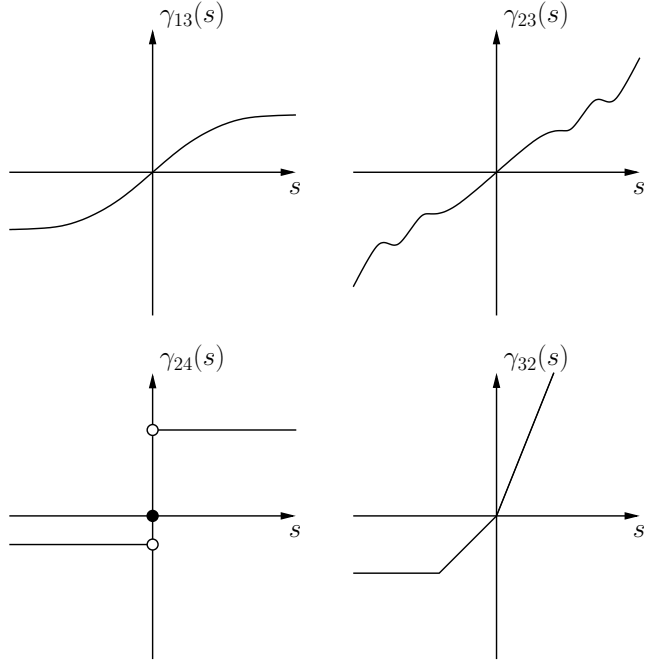}
\caption{Some examples of coupling functions.}
\end{center}
\end{figure}

\begin{figure}[h]
\begin{center}
\includegraphics[scale=0.6]{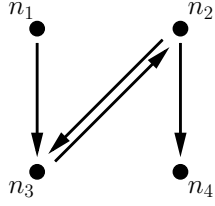}
\caption{Graph of interconnection $\C$.}
\end{center}
\end{figure}

\section{Problem statement}

We consider the following array of coupled harmonic oscillators
\begin{subeqnarray}\label{eqn:harmosc}
\dot{q}_{i}&=&\omega p_{i}\\
\dot{p}_{i}&=&-\omega q_{i}+
\sum_{j\neq i}\gamma_{ij}(p_{j}-p_{i})\,,
\qquad i=1,\,2,\,\ldots,\,p
\end{subeqnarray}
where $\omega>0$ and $\{\gamma_{ij}\}$ is a connected interconnection.
We assume throughout the paper that $\gamma_{ij}$ are locally
Lipschitz.  Let $\xi_{i}\in\Real^{2}$ denote the {\em state} of $i$th
oscillator, i.e., $\xi_{i}=[q_{i}\ p_{i}]^{T}$. When coupling
functions $\gamma_{ij}$ are linear, the oscillators are known to
(exponentially) synchronize for all $\omega$. That is, solutions
$\xi_{i}(\cdot)$ converge to a common (bounded) trajectory, see
\cite{ren08harmonic}. In this paper we investigate the behaviour of
oscillators under nonlinear coupling. In particular, we aim to
understand the effect of the frequency of oscillations $\omega$ on
synchronization for a given interconnection $\{\gamma_{ij}\}$.

\section{Change of coordinates}\label{sec:coc}
We define $S(\omega)\in\Real^{2\times 2}$ and $H\in\Real^{1\times
2}$ as
\begin{eqnarray*}
S(\omega):=
\left[\!\!
\begin{array}{rr}
0&\omega\\-\omega&0
\end{array}\!\!
\right]\,,\quad
H:=[0\ 1]\,.
\end{eqnarray*}
Then we rewrite \eqref{eqn:harmosc} as
\begin{eqnarray}\label{eqn:harmosc2}
\dot{\xi}_{i}=S(\omega)\xi_{i}+H^{T}\sum_{j\neq i}\gamma_{ij}(H(\xi_{j}-\xi_{i}))\,.
\end{eqnarray}
Let us recall the geometric meanings of the terms in
\eqref{eqn:harmosc2}. The first term $S(\omega)\xi_{i}$ defines a
{\em rotation} (with period $2\pi/\omega$) since $S(\omega)$ is a
skew-symmetric matrix. The second term
$H^{T}\sum\gamma_{ij}(H(\xi_{j}-\xi_{i}))$ induces a contraction in
the direction specified by vector $H^{T}$ (so to speak, along the {\em
vertical axis}) such that the {\em projections} of states $\xi_{i}$ on
the vertical axis tend to approach each other. The combined effect of
these two terms is relatively more difficult to visualize. One trick
to partially overcome this difficulty is to look at the system from
the point of view of the observer that sits on a rotating frame of
reference.

Under change of coordinates $x_{i}(t):=e^{-S(\omega)t}\xi_{i}(t)$ we
can by \eqref{eqn:harmosc2} write
\begin{eqnarray}\label{eqn:harmosc3}
\dot{x}_{i}&=&e^{S(\omega)^{T}t}H^{T}\sum_{j\neq i}\gamma_{ij}
(He^{S(\omega)t}(x_{j}-x_{i}))\nonumber\\
&=&\left[\!\!\begin{array}{r} -\sin\omega t\\ \cos\omega t
\end{array}\!\!\right]\sum_{j\neq i}\gamma_{ij}
([-\sin\omega t\ \cos\omega t](x_{j}-x_{i}))\,.
\end{eqnarray} 
Since the change of coordinates is realized via rotation matrix
$e^{-S(\omega)t}$, the relative distances are preserved, that is
$|x_{i}(t)-x_{j}(t)|=|\xi_{i}(t)-\xi_{j}(t)|$ for all $t$ and all
$i,\,j$. This means from the synchronization point of view that the
behaviour of array~\eqref{eqn:harmosc2} will be inherited by
array~\eqref{eqn:harmosc3}. However, exact analysis of
\eqref{eqn:harmosc3} seems still far from yielding. Therefore 
we attempt to understand this system via its approximation.

\section{Average systems}

Observe that the righthand side of
\eqref{eqn:harmosc3} is periodic in time. 
Time average functions $\bar\gamma_{ij}:\Real^{2}\to\Real^{2}$ are
given by
\begin{eqnarray*}
\bar{\gamma}_{ij}(x):=\frac{1}{2\pi}\int_{0}^{2\pi}
\left[\!\!\begin{array}{r} -\sin\varphi \\ \cos\varphi
\end{array}\!\!\right]\gamma_{ij}
([-\sin\varphi\ \cos\varphi]x)d\varphi\,.
\end{eqnarray*}
Then the {\em average} array dynamics read
\begin{eqnarray}\label{eqn:approx}
{\dot\eta}_{i}=\sum_{j\neq i} \bar\gamma_{ij}(\eta_{j}-\eta_{i})\,.
\end{eqnarray}
Theory of perturbations \cite[Ch.~4~\S~17]{arnold88} tells us that, starting
from close initial conditions, the solution of a system with a
periodic righthand side and the solution of the time-average
approximate system stay close for a long time provided that the period
is small enough. Therefore
\eqref{eqn:approx} should tell us a great deal about the 
behaviour of \eqref{eqn:harmosc3} when $\omega\gg 1$. 

Understanding \eqref{eqn:approx} requires understanding average
function $\bar\gamma_{ij}$. The following lemma is helpful from
that respect.

\begin{lemma}\label{lem:key}
We have $\displaystyle \bar\gamma_{ij}(x)=\rho_{ij}(|x|)\frac{x}{|x|}$ where 
$\rho_{ij}:\Real_{\geq 0}\to\Real_{\geq 0}$ is 
\begin{eqnarray*}\label{eqn:key}
\rho_{ij}(r):=\frac{1}{2\pi}\int_{0}^{2\pi}
\gamma_{ij}(r\sin\varphi)\sin\varphi\,d\varphi\,.
\end{eqnarray*}
\end{lemma}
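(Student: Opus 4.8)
The plan is to compute the integral defining $\bar\gamma_{ij}(x)$ directly by exploiting rotational symmetry. First I would observe that the integrand features the unit vector $[-\sin\varphi\ \cos\varphi]^T$ and the scalar $[-\sin\varphi\ \cos\varphi]x$. The natural move is to write $x$ in polar form: pick an angle $\theta$ such that $x = |x|\,[\cos\theta\ \sin\theta]^T$ (assuming $x\neq 0$; the case $x=0$ is immediate since $\gamma_{ij}(0)=0$). Then the inner product $[-\sin\varphi\ \cos\varphi]\,x = |x|(-\sin\varphi\cos\theta + \cos\varphi\sin\theta) = |x|\sin(\varphi-\theta)$.

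Next I would substitute $\psi = \varphi - \theta$ and use $2\pi$-periodicity of the integrand to keep the limits at $0$ and $2\pi$. This turns the integral into
\begin{eqnarray*}
\bar\gamma_{ij}(x) = \frac{1}{2\pi}\int_0^{2\pi}
\left[\!\!\begin{array}{r} -\sin(\psi+\theta) \\ \cos(\psi+\theta)\end{array}\!\!\right]
\gamma_{ij}(|x|\sin\psi)\,d\psi\,.
\end{eqnarray*}
Expanding $-\sin(\psi+\theta) = -\sin\psi\cos\theta - \cos\psi\sin\theta$ and $\cos(\psi+\theta) = \cos\psi\cos\theta - \sin\psi\sin\theta$, the integral splits into four pieces. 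The two pieces carrying a factor $\cos\psi\,\gamma_{ij}(|x|\sin\psi)$ integrate to zero over $[0,2\pi]$: this is an odd function about $\psi = \pi/2$ (and about $\psi = 3\pi/2$), so the contributions cancel — concretely, $\int_0^{2\pi}\cos\psi\,\gamma_{ij}(|x|\sin\psi)\,d\psi = 0$ by the substitution $\psi\mapsto\pi-\psi$. What survives is the factor $\sin\psi\,\gamma_{ij}(|x|\sin\psi)$ multiplying $[-\cos\theta\ -\sin\theta]^T \cdot(-1)$... more carefully, the surviving terms are $-\cos\theta\,\sin\psi\,\gamma_{ij}$ in the first component and $-\sin\theta\,\sin\psi\,\gamma_{ij}$ in the second, so
\begin{eqnarray*}
\bar\gamma_{ij}(x) = \left[\!\!\begin{array}{r}\cos\theta\\ \sin\theta\end{array}\!\!\right]
\cdot\frac{1}{2\pi}\int_0^{2\pi}\gamma_{ij}(|x|\sin\psi)\sin\psi\,d\psi
= \frac{x}{|x|}\,\rho_{ij}(|x|)\,,
\end{eqnarray*}
which is exactly the claim, with $\rho_{ij}$ as defined. (One should double-check the overall sign on the first component; a clean way is to note that by symmetry $\bar\gamma_{ij}(x)$ must be parallel to $x$, since rotating $x$ rotates the whole integral the same way, so only the magnitude needs computing.)

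The main obstacle here is essentially bookkeeping rather than anything deep: getting the trigonometric expansion and the cancellation of the $\cos\psi$ terms exactly right, and confirming the sign so that $\bar\gamma_{ij}(x)$ points along $+x$ rather than $-x$. The conceptual shortcut that sidesteps most of the risk is the equivariance observation: for any rotation $R$, a change of variables $\varphi\mapsto\varphi+\text{(angle of }R)$ shows $\bar\gamma_{ij}(Rx) = R\,\bar\gamma_{ij}(x)$, which forces $\bar\gamma_{ij}(x) = c(|x|)\,x/|x|$ for a scalar function $c$; then one evaluates $c(|x|) = \langle \bar\gamma_{ij}(x), x/|x|\rangle$ at the convenient choice $x = |x|[1\ 0]^T$ (so $\theta = 0$), which collapses the integral immediately to $\rho_{ij}(|x|)$. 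That also makes it transparent that $\rho_{ij}$ is well-defined (independent of the representative angle $\theta$) and maps into $\Real_{\geq 0}$, since property (i) of an interconnection gives $\gamma_{ij}(r\sin\varphi)\sin\varphi\geq 0$ pointwise for $r\geq 0$.
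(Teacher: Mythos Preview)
Your plan is the paper's own: write $x$ in polar form, shift the integration variable to isolate $\gamma_{ij}(r\sin\psi)$, expand via angle-addition, and kill the $\cos\psi$-weighted piece. The only slip is a sign: with $x=|x|[\cos\theta\ \sin\theta]^T$ one has $[-\sin\varphi\ \cos\varphi]\,x=|x|\sin(\theta-\varphi)$, not $|x|\sin(\varphi-\theta)$, and carrying your (incorrect) sign through the expansion yields $-\rho_{ij}(|x|)\,x/|x|$ rather than $+\rho_{ij}(|x|)\,x/|x|$---this is the stray minus you flagged but did not locate. The paper sidesteps the bookkeeping by parametrizing $x=r[-\cos\theta\ \sin\theta]^T$, which makes the inner product $r\sin(\varphi+\theta)$ and lets the signs fall out cleanly after the shift $\varphi\mapsto\varphi-\theta$; your rotational-equivariance shortcut is an equally valid (and arguably cleaner) way to pin down the direction.
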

\begin{proof}
Given $x\in\Real^{2}$, let $r=|x|$ and $\theta\in[0,\,2\pi)$ be such
that $r[-\cos\theta\ \sin\theta]^{T}=x$. Then, by using standard
trigonometric identities,
\begin{eqnarray}\label{eqn:from}
\bar\gamma_{ij}(x)
&=&\frac{1}{2\pi}\int_{0}^{2\pi}
\left[\!\!\begin{array}{r} -\sin\varphi \\ \cos\varphi
\end{array}\!\!\right]\gamma_{ij}
(r(\sin\varphi\cos\theta+\cos\varphi\sin\theta))d\varphi\nonumber\\
&=&\frac{1}{2\pi}\int_{0}^{2\pi}
\left[\!\!\begin{array}{r} -\sin\varphi \\ \cos\varphi
\end{array}\!\!\right]\gamma_{ij}
(r\sin(\varphi+\theta))d\varphi\nonumber\\
&=&\frac{1}{2\pi}\int_{0}^{2\pi}
\left[\!\!\begin{array}{r} -\sin(\varphi-\theta) \\ \cos(\varphi-\theta)
\end{array}\!\!\right]\gamma_{ij}
(r\sin\varphi)d\varphi\nonumber\\
&=&\frac{1}{2\pi}\int_{0}^{2\pi}
\left[\!\!\begin{array}{r} -\sin\varphi\cos\theta+\cos\varphi\sin\theta \\ 
\cos\varphi\cos\theta+\sin\varphi\sin\theta
\end{array}\!\!\right]\gamma_{ij}
(r\sin\varphi)d\varphi\nonumber\\
&=&
\left(
\frac{1}{2\pi}\int_{0}^{2\pi}
\gamma_{ij}
(r\sin\varphi)\sin\varphi\,d\varphi\right)
\left[\!\!\begin{array}{r} -\cos\theta \\ 
\sin\theta
\end{array}\!\!\right]\nonumber\\
&&\qquad +\left(
\frac{1}{2\pi}\int_{0}^{2\pi}
\gamma_{ij}
(r\sin\varphi)\cos\varphi\,d\varphi\right)
\left[\!\!\begin{array}{c} \sin\theta \\ 
\cos\theta
\end{array}\!\!\right]
\end{eqnarray}
We focus on the second term in \eqref{eqn:from}. Observe 
that
\begin{eqnarray*}
\int_{0}^{\pi}\gamma_{ij}(r\sin\varphi)\cos\varphi\,d\varphi
&=&\int_{-\pi/2}^{\pi/2}
\gamma_{ij}\left(r\sin\left(\varphi+\frac{\pi}{2}\right)\right)
\cos\left(\varphi+\frac{\pi}{2}\right)d\varphi\\
&=&0
\end{eqnarray*}
since the integrand is an odd function on the interval of integration.
Likewise,
\begin{eqnarray*}
\int_{\pi}^{2\pi}\gamma_{ij}(r\sin\varphi)\cos\varphi\,d\varphi
&=&\int_{-\pi/2}^{\pi/2}
\gamma_{ij}\left(r\sin\left(\varphi+\frac{3\pi}{2}\right)\right)
\cos\left(\varphi+\frac{3\pi}{2}\right)d\varphi\\
&=&0\,.
\end{eqnarray*}
Therefore
\begin{eqnarray}\label{eqn:from2}
\lefteqn{\int_{0}^{2\pi}\gamma_{ij}(r\sin\varphi)\cos\varphi\,d\varphi}\nonumber\\
&&=\int_{0}^{\pi}\gamma_{ij}(r\sin\varphi)\cos\varphi\,d\varphi
+\int_{\pi}^{2\pi}\gamma_{ij}(r\sin\varphi)\cos\varphi\,d\varphi\nonumber\\
&&=0\,.
\end{eqnarray}
Combining \eqref{eqn:from} and \eqref{eqn:from2} we obtain
\begin{eqnarray*}
\bar\gamma_{ij}(x)
&=&
\left(
\frac{1}{2\pi}\int_{0}^{2\pi}
\gamma_{ij}
(r\sin\varphi)\sin\varphi\,d\varphi\right)
\left[\!\!\begin{array}{r} -\cos\theta \\ 
\sin\theta
\end{array}\!\!\right]\\
&=&\rho_{ij}(|x|)\frac{x}{|x|}\,.
\end{eqnarray*}
Hence the result.
\end{proof}
\\

Lemma~\ref{lem:key} tells us that, given any vector $x$ on the plane,
vector $\bar\gamma_{ij}(x)$ (if it has nonzero magnitude) is in the
same direction as $x$. In the light of this if we now look at
systems~\eqref{eqn:approx} we can roughly visualize the evolution of
the trajectories. Take the $i$th system. Let
$j_{1},\,\ldots,\,j_{\ell}$ be all the indices such that
$j\in\{j_{1},\,\ldots,\,j_{\ell}\}$ implies $\gamma_{ij}\neq 0$.
These indices are sometimes called the indices of {\em neighbors} of
system $i$. Then we can write $\dot\eta_{i}=v_{1}+\ldots+v_{\ell}$
where each $v_{k}$ is a vector pointing from $\eta_{i}$ to the state
of $k$th neighbor system. Therefore the net velocity vector $\sum
v_{k}$ points to some ``weighted mean'' of the neighbor systems'
states. Synchronization of such systems, where the velocity vector of
a system always points to some weighted mean of the positions of its
neighbors, have been studied under the names {\em consensus} and {\em
state agreement}; see, for instance,
\cite{moreau05,angeli06,lin07}. The finding of those works is roughly
that if the interconnection is connected, then the solutions of
systems converge to a common fixed point in space.  We now give the
formal application of this result to our case.

Let us stack the individual states $\eta_{i}$ to form
$\eta:=[\eta_{1}^{T}\
\eta_{2}^{T}\ \ldots\ \eta_{p}^{T}]^{T}$.  Define
\begin{eqnarray*}
\gamma_{\rm av}(\eta):=
\left[\!\!
\begin{array}{c}
\sum\bar\gamma_{1j}(\eta_{j}-\eta_{1})\\
\vdots\\
\sum\bar\gamma_{pj}(\eta_{j}-\eta_{p})
\end{array}\!\!
\right]
\end{eqnarray*}
We then reexpress \eqref{eqn:approx} as
\begin{eqnarray}\label{eqn:eta}
\dot\eta=\gamma_{\rm av}(\eta)\,.
\end{eqnarray}
We now have the following result.
\begin{theorem}\label{thm:KL}
Consider system~\eqref{eqn:eta}. Synchronization manifold $\A$ is
globally asymptotically stable, i.e., there exists $\beta\in\KL$ such
that $|\eta(t)|_{\A}\leq\beta(|\eta(0)|_{\A},\,t)$ for all $t\geq 0$.
\end{theorem}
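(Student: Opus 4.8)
The plan is to construct a common Lyapunov-type function for the synchronization manifold using the total spread of the configuration, and to exploit connectivity together with the sign/class-$\mathcal{K}$ properties of the $\rho_{ij}$ inherited from the interconnection. First I would record the basic geometry of $\bar\gamma_{ij}$ coming from Lemma~\ref{lem:key}: since $\bar\gamma_{ij}(x)=\rho_{ij}(|x|)x/|x|$, and since $\gamma_{ij}(s)s\ge 0$ with $\gamma_{ij}$ either identically zero or lower-bounded by a class-$\mathcal{K}$ function, the scalar gain $\rho_{ij}$ satisfies $\rho_{ij}(r)\ge 0$ for all $r\ge 0$, $\rho_{ij}(0)=0$, and $\rho_{ij}(r)>0$ for $r>0$ whenever $\gamma_{ij}\ne 0$ (indeed $\rho_{ij}(r)=\frac{1}{2\pi}\int_0^{2\pi}\gamma_{ij}(r\sin\varphi)\sin\varphi\,d\varphi$ is an average of nonnegative terms $\gamma_{ij}(r\sin\varphi)\sin\varphi$, strictly positive on a set of positive measure when $\gamma_{ij}\ne 0$). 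Hence each $\bar\gamma_{ij}(\eta_j-\eta_i)$ is a vector pointing from $\eta_i$ toward $\eta_j$ with nonnegative magnitude, i.e.\ \eqref{eqn:eta} is a (nonlinear, directed) consensus system.

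Next I would invoke the convex-hull / diameter argument standard for such systems. Let $\mathcal{C}(t)=\co\{\eta_1(t),\dots,\eta_p(t)\}\subset\Real^2$. Because $\dot\eta_i$ is a nonnegative combination of the inward-pointing vectors $(\eta_j-\eta_i)$, the velocity of each vertex of $\mathcal{C}(t)$ points into $\mathcal{C}(t)$ (it lies in the tangent cone of the convex hull at $\eta_i$), so $\mathcal{C}(t)$ is nonincreasing: $\mathcal{C}(t')\subseteq\mathcal{C}(t)$ for $t'\ge t$. In particular all solutions are bounded, complete, and $|\eta(t)|_{\A}$ is bounded by (a constant times) the initial diameter; combined with $\gamma_{\rm av}(\eta)=0$ on $\A$ and local Lipschitzness, this gives the qualitative stability half of the $\mathcal{KL}$ estimate. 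The remaining work is attractivity: show $|\eta(t)|_{\A}\to 0$.

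For attractivity I would run an invariance/contradiction argument. Pick a direction; the support function $h_u(t)=\max_i u^{T}\eta_i(t)$ is nonincreasing, hence converges; more usefully, fix any initial condition, let $\mathcal{C}_\infty=\bigcap_{t\ge0}\mathcal{C}(t)$, and let $\Omega$ be the (nonempty, compact, invariant) $\omega$-limit set of the trajectory. On $\Omega$ the diameter is constant, say $d$; I want to force $d=0$ using connectivity. Suppose $d>0$ and take a solution entirely in $\Omega$. Choose a supporting hyperplane of the limiting hull and let $F$ be the set of indices attaining the maximum of $u^{T}\eta_i$; along this limit solution $u^{T}\eta_i$ is constant for every $i\in F$ (it cannot strictly decrease since the diameter/hull is frozen, and it cannot increase), so $u^{T}\dot\eta_i=\sum_{j}\rho_{ij}(|\eta_j-\eta_i|)\,u^{T}(\eta_j-\eta_i)/|\eta_j-\eta_i|=0$. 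Each summand is $\le 0$, so each is $0$: for every neighbor $j$ of $i\in F$ we must have $\rho_{ij}(|\eta_j-\eta_i|)=0$ or $u^{T}(\eta_j-\eta_i)=0$, i.e.\ $\eta_j$ also lies on the supporting hyperplane, hence $j\in F$. Thus $F$ is closed under taking neighbors (in the edge direction $(n_i,n_j)\in\setE\iff\gamma_{ij}\ne0$). By connectivity there is a node reachable from every node; iterating, one deduces $F$ must contain $\ldots$ — here one must be a little careful with edge orientation, but the sink-reachability form of connectivity in the excerpt's footnote (a spanning tree toward one node) lets one propagate along the tree and conclude $F=\N$, so the entire limiting configuration lies on the hyperplane; repeating with a second independent direction collapses it to a point, contradicting $d>0$. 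Hence $d=0$, $\Omega\subseteq\A$, and $|\eta(t)|_{\A}\to0$.

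Finally I would upgrade pointwise convergence plus the hull-monotonicity (uniform bound) to a genuine $\mathcal{KL}$ estimate: $\A$ is closed, $\gamma_{\rm av}$ is locally Lipschitz and vanishes on $\A$, the dynamics are time-invariant, solutions are complete, $\A$ is uniformly (Lyapunov) stable from the nonincreasing-hull property, and globally attractive from the previous step; a standard converse-type argument (uniform global stability $+$ uniform global attractivity $\Rightarrow$ $\mathcal{KL}$ estimate) then yields the desired $\beta\in\mathcal{KL}$. The main obstacle I anticipate is the invariance argument: making the "$F$ closed under neighbors $\Rightarrow F=\N$" step fully rigorous given that the graph condition is only reachability-to-a-single-node (so the argument must be phrased in terms of a spanning tree rooted at that node and propagated the right way, or recast via the fact that the common limit point is a convex combination), and handling the non-strictness $\rho_{ij}(r)=0$ possibly for some $r>0$ — which is why one works on the $\omega$-limit set where distances are frozen rather than trying to get strict decrease everywhere. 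Alternatively, one can cite the consensus results \cite{moreau05,lin07} directly once the system has been put in the inward-pointing-convex-combination form, which is the route the paper's surrounding discussion suggests.
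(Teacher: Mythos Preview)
Your fallback---verifying that each $f_i(\eta)$ points into the relative interior of $\co\{\eta_i\}\cup\{\eta_j:\gamma_{ij}\neq 0\}$ and then citing the state-agreement results---is exactly the paper's proof. The paper checks continuity and the relative-interior condition, invokes \cite[Corollary~3.9]{lin07} to get the globally asymptotic state agreement property, records translation invariance $\gamma_{\rm av}(\eta+\xi)=\gamma_{\rm av}(\eta)$ for $\xi\in\A$, and then applies \cite[Proposition~1]{teel00} to upgrade ``class-$\K$ bound plus uniform attractivity'' to a $\KL$ estimate. Your ``standard converse-type argument'' is that last citation.

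Where your direct route differs is in trying to prove the consensus step by hand via the support-function/$\omega$-limit argument. That sketch is close but the step you flag really does fail as written: from ``$F$ is closed under out-neighbors'' and sink-reachability you only get that the sink $r$ lies in $F$, not $F=\N$ (take $F=\{r\}$ when $r$ has no outgoing edges). The easy repair is to run the identical argument for the set $G$ of minimizers of $u^{T}\eta_i$: $G$ is also closed under out-neighbors, so $r\in G$ as well, forcing $u^{T}\eta_r$ to be both the maximum and the minimum, hence all $u^{T}\eta_i$ coincide; varying $u$ collapses the limit configuration to a point. With that patch your self-contained argument goes through and is a legitimate alternative to the paper's citation-based proof; the paper's version is shorter precisely because it outsources this invariance analysis to \cite{lin07}.
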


\begin{proof}
By Lemma~\ref{lem:key} we can write
\begin{eqnarray*}
\dot\eta_{i}=\sum_{j\neq i} \rho_{ij}
(|\eta_{j}-\eta_{i}|)\frac{\eta_{j}-\eta_{i}}{|\eta_{j}-\eta_{i}|}=:f_{i}(\eta)
\end{eqnarray*}
for $i=1,\,2,\,\ldots,\,p$. Let $\G$ denote the graph of interconnection
$\{\gamma_{ij}\}$. Note that $\G$ is connected by assumption.  We make
the following simple observations. Function $\rho_{ij}$ is
continuous and zero at zero. If there is no edge of $\G$ from node $i$
to node $j$ then $\rho_{ij}(r)\equiv 0$. If there is an edge from node
$i$ to node $j$ then $\sigma_{ij}(r)>0$ for $r>0$. 

Therefore $f_{i}$ is continuous; and vector $f_{i}(\eta)$
always points to the (relative) interior of the convex hull of the set
$\{\eta_{i}\}\cup\{\eta_{j}:\mbox{there is an edge of $\G$ from node
$i$}$ $\mbox{to node $j$}\}$. These two conditions together with
connectedness of $\G$ yield by \cite[Corollary~3.9]{lin07} that
system~\eqref{eqn:eta} has the {\em globally asymptotic state
agreement property}, see \cite[Definition~3.4]{lin07}. Another
property of the system is {\em invariance with respect to
translations}. That is, $\gamma_{\rm av}(\eta+\xi)=\gamma_{\rm
av}(\eta)$ for $\xi\in\A$. These properties let us write the
following.
\begin{itemize}
\item[(a)] There exists a class-$\K$ function $\alpha$ such that 
$|\eta(t)|_{\A}\leq\alpha(|\eta(0)|_{\A})$ for all $t\geq 0$.
\item[(b)] For each $r>0$ and $\varepsilon>0$, there exists $T>0$ such that
$|\eta(0)|_{\A}\leq r$ implies $|\eta(t)|_{\A}\leq\varepsilon$ for all
$t\geq T$.
\end{itemize}
Finally, (a) and (b) give us the result by
\cite[Proposition~1]{teel00}.
\end{proof}
\\

Let us now go back to our discussion in the beginning of
Section~\ref{sec:coc}.  There we talked about two actions that shape
the dynamics of systems~\eqref{eqn:harmosc2}, namely, rotation and
vertical contraction. The combined effect of those actions on
synchronization of the systems was not initially apparent. However, by
applying first a change of coordinates \eqref{eqn:harmosc3} and then
averaging
\eqref{eqn:approx} we see that it is likely that two actions will result 
in synchronization, at least when the rotation is rapid
enough. Vaguely speaking, rotation rescues contraction from being
confined only to vertical direction and sort of {\em smears} it
uniformly to all directions, which should bring synchronization. In
the next section we formalize our observation.

\section{Semiglobal practical asymptotic synchronization}

Consider systems~\eqref{eqn:harmosc3}. Stack states $x_{i}$ to
form $\ex:=[x_{1}^{T}\ x_{2}^{T}\ \ldots\ x_{p}^{T}]^{T}$. Define
\begin{eqnarray*}
\gamma(\ex,\,\omega t):=
\left[\!\!
\begin{array}{c}
\left[\!\!\begin{array}{r} -\sin\omega t\\ \cos\omega t
\end{array}\!\!\right]\sum\gamma_{1j}
([-\sin\omega t\ \cos\omega t](x_{j}-x_{1}))\\
\vdots\\
\left[\!\!\begin{array}{r} -\sin\omega t\\ \cos\omega t
\end{array}\!\!\right]\sum\gamma_{pj}
([-\sin\omega t\ \cos\omega t](x_{j}-x_{p}))
\end{array}
\!\!\right]
\end{eqnarray*}
Now reexpress \eqref{eqn:harmosc3} as
\begin{eqnarray}\label{eqn:gamma}
\dot\ex=\gamma(\ex,\,\omega t)\,.
\end{eqnarray}
The following definition is borrowed with slight modification from
\cite{teel99}.

\begin{definition}
Consider system $\dot\ex=f(\ex,\,\omega t)$. Closed set ${\mathcal S}$
is said to be {\em semiglobally practically asymptotically stable} if
for each pair $(\Delta,\,\delta)$ of positive numbers, there exists
$\omega^{*}>0$ such that for each $\omega\geq\omega^{*}$ the following
hold.
\begin{itemize}
\item[(a)] For each $r>\delta$ there exists $\varepsilon>0$ such that
\begin{equation*}
|\ex(0)|_{\mathcal S}\leq \varepsilon \implies |\ex(t)|_{\mathcal S}\leq r \quad 
\forall t\geq 0\,.
\end{equation*}
\item[(b)] For each $\varepsilon<\Delta$ there exists $r>0$ such that
\begin{equation*}
|\ex(0)|_{\mathcal S}\leq \varepsilon \implies  |\ex(t)|_{\mathcal S}\leq r \quad
\forall t\geq 0\,.
\end{equation*}
\item[(c)] For each $r<\Delta$ and $\varepsilon>\delta$ 
there exists $T>0$ such that 
\begin{equation*}
|\ex(0)|_{\mathcal S}\leq r \implies 
|\ex(t)|_{\mathcal S}\leq\varepsilon \quad
\forall t\geq T\,.
\end{equation*}
\end{itemize}
\end{definition}
Below we establish the semiglobal practical asymptotic stability of
synchronization manifold. To do that we use \cite[Thm.~2]{teel99},
which says that the origin of system $\dot\ex=f(\ex,\,\omega t)$ (where $f$
is periodic in time) is semiglobally practically asymptotically stable
if the origin of $\dot\ex=f_{\rm av}(\ex)$ (where $f_{\rm av}$
is the time average of $f$) is globally asymptotically stable.
\begin{theorem}\label{thm:main}
Consider system~\eqref{eqn:gamma}. Synchronization manifold $\A$ is
semiglobally practically asymptotically stable.
\end{theorem}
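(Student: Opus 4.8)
The plan is to reduce Theorem~\ref{thm:main} to the point‑stability statement \cite[Thm.~2]{teel99} by quotienting out the subspace $\A$. The righthand side $\gamma(\ex,\omega t)$ of \eqref{eqn:gamma} depends on $\ex$ only through the relative vectors $x_j-x_i$, so the dynamics descend to $\Real^{2p}/\A$. Concretely, I would let $P\in\Real^{2p\times 2p}$ be the orthogonal projection onto $\A^{\perp}=\{\ex:\sum_i x_i=0\}$ and set $\zi:=P\ex$; then $|\zi|=|\ex|_{\A}$, and since $(I-P)\ex\in\A$ forces $x_j-x_i=(P\ex)_j-(P\ex)_i$, the vector $P\gamma(\ex,\omega t)$ is a function of $\zi$ and $t$ only, say $P\gamma(\ex,\omega t)=g(\zi,\omega t)$, with $g(\zi,\cdot)$ of period $2\pi$ and locally Lipschitz in $\zi$ because the $\gamma_{ij}$ are. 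Identifying $\A^{\perp}$ with $\Real^{2(p-1)}$ via a fixed orthonormal basis, $\zi$ obeys $\dot\zi=g(\zi,\omega t)$, and because both \eqref{eqn:gamma} and this reduced system are invariant under adding any vector of $\A$ to the state, solutions of one project/lift to solutions of the other; hence ``$\A$ is semiglobally practically asymptotically stable for \eqref{eqn:gamma}'' is equivalent, clause by clause in the definition, to ``the origin is semiglobally practically asymptotically stable for $\dot\zi=g(\zi,\omega t)$'' (using $|\zi|=|\ex|_{\A}$).

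Next I would identify the time average of $g$. Averaging the explicit trigonometric expression for $\gamma(\ex,\cdot)$ over one period yields exactly the stacked vector field $\gamma_{\rm av}$ of \eqref{eqn:eta} — this is the computation already carried out through Lemma~\ref{lem:key}. Since $P$ is linear and time independent, averaging commutes with the projection, so the time average of $g(\cdot,\omega t)$ is $g_{\rm av}(\zi)=P\gamma_{\rm av}(\cdot)$ read in $\zi$‑coordinates; i.e. $\dot\zi=g_{\rm av}(\zi)$ is precisely \eqref{eqn:eta} written on $\A^{\perp}$. By Theorem~\ref{thm:KL} there is $\beta\in\KL$ with $|\eta(t)|_{\A}\leq\beta(|\eta(0)|_{\A},t)$, which transcribed to $\zi$‑coordinates reads $|\zi(t)|\leq\beta(|\zi(0)|,t)$ — exactly global (uniform) asymptotic stability of the origin of $\dot\zi=g_{\rm av}(\zi)$.

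With both ingredients in hand I would invoke \cite[Thm.~2]{teel99}: $g$ is periodic in time and locally Lipschitz, and the origin of its average $g_{\rm av}$ is globally asymptotically stable, hence the origin of $\dot\zi=g(\zi,\omega t)$ is semiglobally practically asymptotically stable. Undoing the identification $\zi=P\ex$, $|\zi|=|\ex|_{\A}$, we conclude that $\A$ is semiglobally practically asymptotically stable for \eqref{eqn:gamma}, which is the claim.

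I expect the only real work to be the bookkeeping of the first paragraph: checking that the relative‑coordinate system $\dot\zi=g(\zi,\omega t)$ is genuinely well posed (well defined, locally Lipschitz, and forward complete for the solutions that matter), that it preserves $\A^{\perp}$, and that the three clauses defining semiglobal practical asymptotic stability for the set $\A$ really do coincide with those for the origin of the reduced system under $|\zi|=|\ex|_{\A}$. One must also confirm that the hypotheses of \cite[Thm.~2]{teel99} are met — periodicity and local Lipschitzness of $g$, and the \emph{uniform} $\KL$ estimate for the average system, which Theorem~\ref{thm:KL} supplies. None of this is deep, but it is where care is required; the substantive dynamical content has already been absorbed into Lemma~\ref{lem:key} and Theorem~\ref{thm:KL}.
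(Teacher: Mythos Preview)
Your proposal is correct and follows essentially the same route as the paper: quotient out $\A$ to get a reduced time-periodic system, invoke Theorem~\ref{thm:KL} for global asymptotic stability of the origin of its average, and then apply \cite[Thm.~2]{teel99}. The only cosmetic difference is the choice of coordinates on $\Real^{2p}/\A$: the paper uses $\vay=(x_{2}-x_{1},\ldots,x_{p}-x_{1})$, whereas you use the orthogonal projection onto $\A^{\perp}$, which has the pleasant side effect that $|\zi|=|\ex|_{\A}$ exactly rather than merely up to equivalence of norms.
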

\begin{proof}
Consider systems~\eqref{eqn:harmosc3}. Observe that the righthand side
depends only on the relative distances $x_{j}-x_{i}$. Let us define
\begin{eqnarray*}
\vay:=\left[\!\!
\begin{array}{c}
x_{2}-x_{1}\\
x_{3}-x_{1}\\
\vdots\\
x_{p}-x_{1}
\end{array}
\!\!\right]
\end{eqnarray*}
Note that $\dot\vay=f(\vay,\,\omega t)$ for some $f:\Real^{2p-2}
\times\Real_{\geq 0}\to \Real^{2p-2}$. Since functions 
$\gamma_{ij}$ are assumed to be locally Lipschitz,
$f$ is locally Lipschitz in $\vay$ uniformly in $t$.
Also, $f$ is periodic in time by
\eqref{eqn:harmosc3}. Now consider systems~\eqref{eqn:approx}. Again
the righthand side depends only on the relative distances
$\eta_{j}-\eta_{i}$. Define
\begin{eqnarray*}
\zi:=\left[\!\!
\begin{array}{c}
\eta_{2}-\eta_{1}\\
\eta_{3}-\eta_{1}\\
\vdots\\
\eta_{p}-\eta_{1}
\end{array}
\!\!\right]
\end{eqnarray*}
Then $\dot\zi=f_{\rm av}(\zi)$ where $f_{\rm av}$ is the time
average of $f$ and locally Lipschitz both due to that
$\bar\gamma_{ij}$ is the time average of $\gamma_{ij}$.

Theorem~\ref{thm:KL} implies that the origin of $\dot\zi=f_{\rm
av}(\zi)$ is globally asymptotically stable. Then
\cite[Thm.~2]{teel99} tells us that the origin of
$\dot\vay=f(\vay,\,\omega t)$ is semiglobally practically
asymptotically stable. All there is left to complete the proof is to
realize that semiglobal practical asymptotic stability of the origin
of $\dot\vay=f(\vay,\,\omega t)$ is equivalent to semiglobal practical
asymptotic stability of synchronization manifold $\A$ of
system~\eqref{eqn:gamma}.
\end{proof}
\\

Theorem~\ref{thm:main} can be recast into the following form.

\begin{theorem}\label{thm:direct}
Consider coupled harmonic oscillators~\eqref{eqn:harmosc}.  For each
pair $(\Delta,\,\delta)$ of positive numbers, there exists
$\omega^{*}>0$ such that for each $\omega\geq\omega^{*}$ the following
hold.
\begin{itemize}
\item[(a)] For each $r>\delta$ there exists $\varepsilon>0$ such that
\begin{equation*}
\max_{i,\,j}|\xi_{i}(0)-\xi_{j}(0)|\leq \varepsilon \implies 
\max_{i,\,j}|\xi_{i}(t)-\xi_{j}(t)|\leq r \quad
\forall t\geq 0\,.
\end{equation*}
\item[(b)] For each $\varepsilon<\Delta$ there exists 
$r>0$ such that
\begin{equation*}
\max_{i,\,j}|\xi_{i}(0)-\xi_{j}(0)|\leq \varepsilon
\implies \max_{i,\,j}|\xi_{i}(t)-\xi_{j}(t)|\leq r\quad 
\forall t\geq 0\,.
\end{equation*}
\item[(c)] For each $r<\Delta$ and $\varepsilon>\delta$ 
there exists $T>0$
such that 
\begin{equation*}
\max_{i,\,j}|\xi_{i}(0)-\xi_{j}(0)|\leq r \implies 
\max_{i,\,j}|\xi_{i}(t)-\xi_{j}(t)|\leq\varepsilon \quad 
\forall t\geq T\,.
\end{equation*}
\end{itemize}
\end{theorem}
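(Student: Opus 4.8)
The plan is to derive Theorem~\ref{thm:direct} directly from Theorem~\ref{thm:main} by unwinding the change of coordinates introduced in Section~\ref{sec:coc}. The key observation is that the three items (a)--(c) in the statement are nothing but the definition of semiglobal practical asymptotic stability of $\A$ for system~\eqref{eqn:harmosc}, phrased in terms of the original states $\xi_i$, and that the quantity $\max_{i,j}|\xi_i(t)-\xi_j(t)|$ is equivalent (up to constants depending only on $p$) to the distance $|\xi(t)|_{\A}$, where $\xi:=[\xi_1^T\ \ldots\ \xi_p^T]^T$; indeed for any $\xi$ one has $c_1\max_{i,j}|\xi_i-\xi_j|\le |\xi|_{\A}\le c_2\max_{i,j}|\xi_i-\xi_j|$ for suitable $0<c_1\le c_2$, since the nearest point of $\A$ is obtained by setting every component equal to the average $\bar\xi=\tfrac1p\sum_i\xi_i$.

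First I would recall that the map $x_i(t)=e^{-S(\omega)t}\xi_i(t)$ is, at each fixed $t$, an orthogonal transformation applied identically to every oscillator; hence $|x_i(t)-x_j(t)|=|\xi_i(t)-\xi_j(t)|$ for all $t$ and all $i,j$, as already noted after \eqref{eqn:harmosc3}. Consequently $|\ex(t)|_{\A}=|\xi(t)|_{\A}$ for all $t$, because the block-diagonal rotation $\mathrm{diag}(e^{-S(\omega)t},\ldots,e^{-S(\omega)t})$ maps $\A$ onto $\A$ isometrically. Therefore semiglobal practical asymptotic stability of $\A$ for \eqref{eqn:gamma} (Theorem~\ref{thm:main}) transfers verbatim to semiglobal practical asymptotic stability of $\A$ for \eqref{eqn:harmosc2}, and hence for \eqref{eqn:harmosc}, with the same $\omega^*$ for each pair $(\Delta,\delta)$.

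Second, I would translate the $|\cdot|_{\A}$ formulation into the $\max_{i,j}|\xi_i-\xi_j|$ formulation. Given a pair $(\Delta,\delta)$, apply Theorem~\ref{thm:main} with the rescaled pair $(\Delta/c_1,\ \delta\, c_2)$ — or whatever constants make the bookkeeping work — to obtain $\omega^*$. For item (a): given $r>\delta$, the rescaled $r/c_2>\delta/c_2$ (ensure this exceeds the rescaled tolerance) yields an $\varepsilon_0>0$ from the definition; take $\varepsilon=\varepsilon_0 c_1$, so that $\max_{i,j}|\xi_i(0)-\xi_j(0)|\le\varepsilon$ forces $|\xi(0)|_{\A}\le\varepsilon_0$, hence $|\xi(t)|_{\A}\le r/c_2$, hence $\max_{i,j}|\xi_i(t)-\xi_j(t)|\le r$. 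Items (b) and (c) are handled the same way, each time padding the given bound by the appropriate factor of $c_1$ or $c_2$ so that the hypothesis of the definition of semiglobal practical asymptotic stability is met and its conclusion delivers the desired estimate. The only mild care needed is to make sure the constraints ``$r>\delta$'', ``$\varepsilon<\Delta$'', ``$r<\Delta$'', ``$\varepsilon>\delta$'' survive the rescaling; this is arranged by shrinking/enlarging $\delta$ and $\Delta$ by the fixed factors $c_1,c_2$ before invoking Theorem~\ref{thm:main}.

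The main obstacle is essentially bookkeeping rather than substance: one must chase the constants $c_1,c_2$ relating $|\cdot|_{\A}$ to $\max_{i,j}|\cdot-\cdot|$ through all three clauses and through the rescaling of $(\Delta,\delta)$ without ever letting a strict inequality degrade into a non-strict one or vice versa. Everything else — the isometry of the coordinate change, the equivalence of the two norms on $\Real^{2p}$, and the hard analytic content (global asymptotic stability of the average system plus the averaging theorem \cite[Thm.~2]{teel99}) — has already been established, so this theorem is a corollary in all but name.
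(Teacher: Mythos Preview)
Your proposal is correct and matches the paper's approach: the paper gives no separate proof of Theorem~\ref{thm:direct} at all, merely stating that ``Theorem~\ref{thm:main} can be recast into the following form,'' and your argument supplies exactly the routine details (the isometry $|\xi_i-\xi_j|=|x_i-x_j|$ already noted after \eqref{eqn:harmosc3}, plus the equivalence of $|\cdot|_{\A}$ and $\max_{i,j}|\cdot-\cdot|$) that justify this recasting.
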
 

\section{Conclusion}

For nonlinearly coupled harmonic oscillators we have shown that
synchronization manifold is semiglobally practically asymptotically
stable in the frequency of oscillations. Our assumption on each coupling
function is that it is locally Lipschitz and, if nonzero, its graph
lies in the first and third quadrants and does not get arbitrarily
close to the horizontal axis when far from the origin. Our assumption
on the interconnection graph is the minimum; that is, it is connected.

One last remark we want to make is the following. If we look at
\eqref{eqn:harmosc2} we realize that $H=[0\ 1]$ is not necessary for
the rest of the analysis. In fact any nonzero $H\in\Real^{1\times 2}$
is no worse than $[0\ 1]$. For instance, for $H=[1\ 0]$ coupled
harmonic oscillators would be represented by
\begin{eqnarray*}
\dot{q}_{i}&=&\omega p_{i}+\sum_{j\neq i}\gamma_{ij}(q_{j}-q_{i})\\
\dot{p}_{i}&=&-\omega q_{i}
\end{eqnarray*}
for which Theorem~\ref{thm:direct} is valid.
 
\bibliographystyle{plain} 
\bibliography{references}

\end{document}